\newcommand{\C} {\mathbb{C}}
\newcommand{\Q} {\mathbb{Q}}
\newcommand{\N}  {\mathbb{N}}
\newcommand{\Z}{\mathbb{Z}}
\newcommand{\PP}{\mathbb{P}}
\newcommand{\NS}{\mathop{\rm NS}}
\newcommand{\MW}{\mathop{\rm MW}}
\newcommand{\Km}{\mathop{\rm Km}}
\newtheorem{Theorem}{Theorem}
\newtheorem{Proposition}[Theorem]{Proposition}
\newtheorem{Lemma}[Theorem]{Lemma}
\theoremstyle{remark}
\newtheorem{Remark}[Theorem]{Remark}
\newtheorem{Example}[Theorem]{Example}
\theoremstyle{definition}
\begin{document}

\title{Sandwich theorems for Shioda--Inose structures}

\author{Matthias Sch\"utt}
\address{Institut f\"ur Algebraische Geometrie, Leibniz Universit\"at
  Hannover, Welfengarten 1, 30167 Hannover, Germany}
\email{schuett@math.uni-hannover.de}
\urladdr{http://www.iag.uni-hannover.de/$\sim$schuett/}

\subjclass[2010]{14J28; 11G25, 14G15}
\keywords{K3 surface, Shioda-Inose structure, elliptic fibration, isogeny}
%
%
%
%

\date{February 22, 2011}

 \begin{abstract}
We give a geometric construction of the three infinite series of K3 surfaces
which are
sandwiched by Kummer surfaces
within a Shioda--Inose structure.
Explicit examples are also provided.
 \end{abstract}
 
 \maketitle

\section{Introduction}

Shioda--Inose structures have recently featured very prominently in the arithmetic and geometry of K3 surfaces,
relating specific K3 surfaces to Kummer surfaces in a natural way.
In \cite{Sandwich}, Shioda showed that any jacobian elliptic K3 surface
with two singular fibres of type $II^*$ is in fact sandwiched by the Kummer surface in question
(which is indeed of product type). 
Subsequently Ma gave an abstract Hodge theoretic proof that any Shioda--Inose structure can be extended to a sandwich \cite{Ma}.
However, for the generic situation of Picard number $17$ there are only five explicit geometric examples due to Kumar \cite{Kumar}, van Geemen--Sarti \cite{vGS} and Koike \cite{Koike}.

In this paper we will develop three infinite series
of K3 surfaces of Picard number (at least) $17$
with a sandwiched Shioda--Inose structure by geometric means:
\begin{Theorem}
\label{thm}
Let $N\in\N$. Assume that one of the following three alternatives holds:
\begin{enumerate}
\item
$\forall \,p\mid N: p\equiv 1 \mod 4;$
\item
$N=\prod_i p_i$ or $N=7\prod_i p_i, \forall\, i: p_i\equiv 1,2,4 \mod 7;$
\item
$N=\prod_i p_i$ or $15\prod_i p_i$ for an odd number of primes $p_i\equiv 2,8 \mod 15$
or\\
 $N=3\prod_i p_i$ or $5\prod_i p_i$ for an even number of primes $p_i\equiv 2,8 \mod 15.$
\end{enumerate}
Then for any K3 surface $X$ with a primitive embedding 
$T_X\hookrightarrow U^2+\langle-2N\rangle$
there is an explicit geometric sandwiched Shioda--Inose structure.
\end{Theorem}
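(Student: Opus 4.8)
The plan is to reduce the statement to a single geometric construction on the ``maximal'' member of the relevant family and to let it propagate by specialization. First I would record the lattice input: since $\langle-2N\rangle$ embeds primitively into $U$ (as $e-Nf$), the hypothesis gives a chain of primitive embeddings $T_X\hookrightarrow U^2\oplus\langle-2N\rangle\hookrightarrow U^3$, so by the criterion of Morrison (a primitive embedding $E_8(-1)^{\oplus2}\hookrightarrow\NS(X)$, equivalently $T_X\hookrightarrow U^3$) the surface $X$ already carries a Shioda--Inose structure abstractly; what is missing is that it be \emph{geometric} and \emph{sandwiched}. Taking orthogonal complements inside the K3 lattice, the primitive embedding $T_X\hookrightarrow U^2\oplus\langle-2N\rangle$ forces $M:=E_8(-1)^{\oplus2}\oplus\langle2N\rangle\hookrightarrow\NS(X)$, so every such $X$ is $M$-polarized. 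Hence it suffices to build the sandwiched structure on the generic $M$-polarized K3 surface $X_0$ (where $\NS(X_0)=M$, $T_{X_0}=U^2\oplus\langle-2N\rangle$ and $\rho=17$): any other $X$ is a specialization, and a construction that uses only classes of $M$ extends over the whole $M$-polarized family and specializes to $X$.

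Next I would identify the two ends of the sandwich. The Hodge isometry $T_{X_0}\cong T_A$ supplied by a Shioda--Inose structure identifies the partner $A$ as a $(1,N)$-polarized abelian surface (for which $T_A\cong U^2\oplus\langle-2N\rangle$), with Kummer surface satisfying $T_{\Km(A)}\cong T_A(2)$. The goal is then a pair of explicit dominant degree-$2$ rational maps $\Km(A)\dashrightarrow X_0\dashrightarrow\Km(A)$ whose round trip is the expected degree-$4$ self-map of $\Km(A)$, in the spirit of \cite{Sandwich} and \cite{Ma}.

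The heart is the explicit construction, carried out through elliptic fibrations and isogenies. I would use the splitting $\NS(X_0)=E_8(-1)^{\oplus2}\oplus\langle2N\rangle$---which already contains the $E_8(-1)^{\oplus2}$ demanded by Morrison---to write down a concrete jacobian elliptic fibration on $X_0$. In contrast to Shioda's surfaces with two $II^*$ fibres \cite{Sandwich}, which need $\rho\ge18$, here the reducible fibres can carry at most one such $E_8$, so that the second $E_8$ summand, together with $\langle2N\rangle$, becomes visible only after a fibrewise isogeny whose degree is tied to $2N$. The three alternatives of the theorem are then exactly the three situations in which this isogeny exists: conditions (1), (2), (3) say precisely that $N$, up to the ramified factors $7$, $3$, $5$, is represented by the norm form of the order of discriminant $-4$, $-7$, $-15$ in $\Q(i)$, $\Q(\sqrt{-7})$, $\Q(\sqrt{-15})$ respectively, which is what allows the corresponding cyclic isogeny to be defined on the generic fibre and hence on the whole surface. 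Quotienting $X_0$ by the associated van Geemen--Sarti (Nikulin) involution \cite{vGS}, respectively dividing by the isogeny kernel, lands on an elliptic fibration of $\Km(A)$, and the dual isogeny furnishes the reverse arrow $\Km(A)\dashrightarrow X_0$; composing gives the sandwich. Finally I would verify on transcendental lattices that the two maps induce the Hodge isometries $T_{X_0}\cong T_A$ and $T_{\Km(A)}\cong T_A(2)$, so that the pair is a genuine, and automatically sandwiched, Shioda--Inose structure; along the way the Weierstrass equations produced give the promised explicit examples.

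I expect the main obstacle to be this third step, and within it the translation of the congruence conditions into the existence of the isogeny on the universal family. Showing that ``$2N$ is a value of the discriminant-$(-4,-7,-15)$ form'' is equivalent to ``the required section or isogeny kernel lies in $M$ and deforms with the $M$-polarization'' is the crux, and the discriminant-$15$ case (class number $2$) is the delicate one: here genus theory intervenes, and the parity of the number of primes $p_i\equiv2,8\bmod15$ together with the ramified factors $3$ and $5$ is exactly what selects the correct form within the genus, matching the geometric constraint.
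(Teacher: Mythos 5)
Your overall architecture (reduce to the generic member of a lattice-polarized family, realise the sandwich by an elliptic fibration with a torsion section and its dual isogeny, and recognise the conditions on $N$ as primitive representation by binary quadratic forms of discriminant $-4$, $-7$, $-15$, with genus theory explaining the parity condition in the $-15$ case) matches the paper's strategy. But the central mechanism you propose is wrong: you assert that the three congruence conditions are ``exactly the three situations in which this isogeny exists,'' where the isogeny is ``a fibrewise isogeny whose degree is tied to $2N$.'' No such isogeny exists for general $N$: torsion in the Mordell--Weil group of an elliptic K3 surface is bounded, so a cyclic isogeny of degree growing with $N$ cannot be defined on the generic fibre. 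In the actual construction the isogeny is \emph{always} of degree $2$ (translation by a $2$-torsion section, i.e.\ a van Geemen--Sarti involution), independently of $N$. The paper starts from three fixed $4$-dimensional families with $\rho=16$ or $17$ (with transcendental lattices $U^2+A_1^2$, $U^2+\bigl(\begin{smallmatrix}-2&-1\\-1&-4\end{smallmatrix}\bigr)$, $U^2+\bigl(\begin{smallmatrix}-4&-1\\-1&-4\end{smallmatrix}\bigr)$), each already carrying the $2$-torsion section, and the conditions on $N$ enter only through \emph{lattice enhancement}: they characterise which $-2N$ are primitively represented by the rank-$2$ orthogonal complement of $U^2$ in $T_X$, hence which $3$-dimensional subfamilies with $T=U^2+\langle-2N\rangle$ exist inside each family. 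Geometrically the enhancement produces an extra section $P$ of height $N/2$, $2N/7$ or $2N/15$; it does not change the degree of the quotient map.

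Two further points are missing. First, you never verify that the degree-$2$ quotient $Y$ actually has $T_Y\cong T_X(2)$ (so that it is the Kummer surface of the right abelian surface); this is the content of Proposition \ref{prop1} and Lemmas \ref{lem2}, \ref{lem3}, and in the first series it requires a nontrivial argument that the induced section $P^*$ on $Y$ is not $2$-divisible (which in fact \emph{fails} for even $N$ --- this is why alternative (1) of the theorem excludes the ``twice such'' case that $A_1^2$ also represents). Second, your specialization step (``a construction that uses only classes of $M$ extends \dots and specializes'') glosses over the genuine issue that the pushforward embedding $T_{X'}(2)\hookrightarrow T_{Y'}$ need not remain an isometry on special members; this is exactly what Lemma \ref{lem0} is for. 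As written, the proposal would not compile into a proof without replacing its key step.
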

The theorem will be proved by exhibiting three distinct families of K3 surfaces in Sections \ref{s1}-\ref{s3}.
See \ref{ss1}, \ref{ss2}, \ref{ss3} for the precise arguments.
Our construction uses specialisation from 4-dimensional families of K3 surfaces
via lattice enhancements
and elliptic fibrations with 2-torsion sections.
We point out that our construction in particular allows us to realise
all five previously known examples (by inspection of the discriminants, see Section \ref{ss:SI}),
but it does not give any transcendental lattices $U^2+\langle-2N\rangle$ beyond those specified in Theorem \ref{thm} (cf.~\ref{ss:further}).

The paper is organised as follows.
The next section reviews basics on Shioda--Inose structures and sandwiches.
Each of the subsequent sections is devoted to one of the families in question.
Throughout we work over the field $\C$ of complex numbers
although the equations provided will make perfect sense over any field $k$ of characteristic different from 2.

\section{Shioda--Inose structures and sandwiching}
\label{ss:SI}

A classical example of K3 surfaces consists in Kummer surfaces:
starting from an abelian surface $A$, we consider the quotient 
by inversion with respect to the group structure.
This attains 16 rational double point singularities (type $A_1$)
which can be resolved to a K3 surface denoted by $\Km(A)$.
Thus there is a rational map of degree $2$
\[
A \dasharrow \Km(A).
\]
This is also reflected in the transcendental lattices, i.e.~the orthogonal complements $T_X$
of $\NS(X)$ inside $H^2(X,\Z)$ with respect to cup-product.
Namely the transcendental lattices are similar, i.e.~the rank is constant while the intersection
form is multiplied by 2:
\[
T_{\Km(A)} = T_A(2).
\]
From the classification point of view, a natural problem is to relate $\Km(A)$ to a K3 surface
$X$ with the original transcendental lattice 
\[
T_X=T_A.
\]
This was first achieved by Shioda and Inose in \cite{SI}
in the case of product type $A\cong E \times E'$
where $E, E'$ are elliptic curves.
Their construction (geared towards K3 surfaces with Picard number 20)
makes crucial use of jacobian elliptic fibrations on $\Km(E\times E')$. 
In fact, $X$ is shown to admit a rational map to $\Km(E\times E')$ of degree $2$.
In other words  $X$ admits a Nikulin involution (8 isolated fixed points) 
whose quotient gives rise to $\Km(E\times E')$ as its resolution.
Following Morrison \cite{Mo}, exactly this is usually required as ingredient of a Shioda--Inose structure:
 \begin{eqnarray}
 \label{eq:SI}
  \xymatrix{A \ar@{-->}[dr] && X\ar@{-->}[dl]\\
 & \Km(A)&}
 \end{eqnarray}
In the situation of non-isogenous elliptic curves,
one has $\NS(A)=U$ and $T_A=U^2$, 
where $U$ denotes the hyperbolic plane $\Z^2$ with intersection form
$\begin{pmatrix} 0 & 1\\1 & 0\end{pmatrix}$.
The generic situation is somewhat different as $A$ will have Picard number one.
If $A$ is endowed with a polarisation of degree $2N (N\in\N)$, then we have 
\[
T_A=U^2+\langle-2N\rangle.
\]
In terms of lattice theory Morrison gave a complete answer which K3 surfaces admit a Shioda--Inose structure \cite{Mo}:

\begin{Theorem}[Morrison]
\label{thm:Mo}
An algebraic K3 surface $X$  admits a Shioda--Inose structure if and only if there is a primitive embedding
\[
T_X \hookrightarrow U^2+\langle-2N\rangle \;\;\; \text{ for some } N\in\N.
\]
\end{Theorem}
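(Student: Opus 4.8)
The plan is to prove Morrison's theorem by translating the geometric condition (existence of a Shioda--Inose structure) into lattice-theoretic language and applying Nikulin's theory of lattice embeddings and quotients. The key structural fact is that a Shioda--Inose structure as in~\eqref{eq:SI} packages two separate pieces of data: a Nikulin involution on $X$ whose quotient resolves to a Kummer surface $\Km(A)$, and the Kummer construction relating $A$ to $\Km(A)$. The first piece tells us (by Nikulin's analysis of Nikulin involutions on K3 surfaces) exactly how the transcendental lattices of $X$ and $\Km(A)$ are related; the second piece, via the similarity $T_{\Km(A)}=T_A(2)$ recorded in the excerpt, relates $T_{\Km(A)}$ to $T_A$. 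Composing these should yield $T_X\cong T_A$, while the abelian surface $A$ contributes the lattice $U^2+\langle-2N\rangle$ as its transcendental lattice.

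First I would establish the forward direction. Assuming $X$ admits a Shioda--Inose structure, the Nikulin involution $\iota$ on $X$ acts trivially on $T_X$ (any automorphism of finite order preserving the holomorphic $2$-form acts as a root of unity on $T_X$, and a Nikulin involution preserves the $2$-form), so $T_X$ injects into $H^2$ of the quotient K3 surface $Y=\Km(A)$. The standard computation for Nikulin involutions shows that the induced map on transcendental lattices is an isometry onto a finite-index sublattice with the intersection form multiplied by $2$, i.e.\ $T_X(2)\hookrightarrow T_Y$ with $T_Y=T_{\Km(A)}$. Combining with $T_{\Km(A)}=T_A(2)$ gives $T_X(2)\hookrightarrow T_A(2)$, hence $T_X\hookrightarrow T_A$, and a rank count forces an isometry $T_X\cong T_A$. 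Since $A$ carries a polarisation of degree $2N$ one has $T_A=U^2+\langle-2N\rangle$, and the embedding $T_X\hookrightarrow U^2+\langle-2N\rangle$ is visibly primitive because it is an isometry onto the whole lattice in the generic case, or primitive by construction when $T_X$ is of smaller rank and sits inside a larger $A$.

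For the converse, I would start from a primitive embedding $T_X\hookrightarrow U^2+\langle-2N\rangle$ and \emph{build} the abelian surface and the two rational maps. Given such an embedding, one realises $U^2+\langle-2N\rangle$ as the transcendental lattice of an abelian surface $A$ with a degree-$2N$ polarisation (here one invokes surjectivity of the period map for abelian surfaces, or equivalently the fact that every lattice of signature $(2,b)$ with the right discriminant form embeds as a transcendental lattice). Then $\Km(A)$ has $T_{\Km(A)}=T_A(2)$, and the primitive embedding $T_X\hookrightarrow T_A$ yields, after scaling by $2$, a primitive embedding $T_X(2)\hookrightarrow T_{\Km(A)}$ that is compatible with the discriminant data needed to produce a Nikulin involution. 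The heart of the argument is to run Nikulin's existence criterion for primitive embeddings of even lattices (controlled by signatures and discriminant forms) to guarantee that the overlattice and the $\Z/2$-action glue together into an honest K3 surface $X$ admitting the Nikulin involution with quotient $\Km(A)$; this produces the right-hand arrow of~\eqref{eq:SI}, while the Kummer map gives the left-hand arrow.

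The main obstacle I expect is the converse direction, specifically the passage from the purely lattice-theoretic embedding to the existence of the \emph{geometric} Nikulin involution and its Kummer quotient. Verifying that the abstract overlattice and the prescribed isometry of order $2$ are actually induced by an algebraic automorphism of a genuine K3 surface requires the full strength of the Torelli theorem and Nikulin's classification of the discriminant-form conditions under which a given sublattice embedding extends to the Nikulin-involution configuration (including the presence of the eight $A_1$ classes and the Kummer lattice). Checking these discriminant-form compatibilities—that the glue groups and their quadratic forms match on both sides of the similarity $T_X(2)\leftrightarrow T_A(2)$—is the delicate computation, and it is precisely where the constraint $T_A=U^2+\langle-2N\rangle$ (rather than an arbitrary rank-$3$ lattice) enters decisively.
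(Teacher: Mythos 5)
First, note that the paper offers no proof of this statement: it is quoted as Morrison's theorem with a citation to \cite{Mo}, so there is no internal argument to compare yours against. Your outline does follow the broad strategy of Morrison's original proof, but as written it has genuine gaps in both directions. In the forward direction, the step ``a rank count forces an isometry $T_X\cong T_A$'' fails as stated: an embedding of even lattices of the same rank is only a finite-index sublattice, not automatically surjective. In Morrison's formulation this is handled by building the Hodge isometry $T_X(2)\cong T_{\Km(A)}$ into the very definition of a Shioda--Inose structure (it is not a formal consequence of merely having a Nikulin involution whose resolved quotient is $\Km(A)$); absent that, a discriminant computation is needed. Likewise $T_A=U^2+\langle-2N\rangle$ holds only when $\rho(A)=1$; for special $A$ you must still produce a primitive embedding of the smaller lattice $T_A$ into some $U^2+\langle-2N\rangle$, which is a separate lattice-theoretic step (factoring the primitive embedding $T_A\hookrightarrow H^2(A,\Z)=U^3$ through a corank-one primitive sublattice of the required shape).

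For the converse you correctly flag the passage from lattice data to geometry as the crux, but you never supply the mechanism, and the route you sketch (realising $U^2+\langle-2N\rangle$ as $T_A$ and then ``gluing'' an abstract $\Z/2$-action onto an overlattice) is not how the involution on $X$ is actually obtained. Morrison's argument runs through the equivalent criterion stated immediately after the theorem in the paper: the primitive embedding $T_X\hookrightarrow U^2+\langle-2N\rangle\hookrightarrow U^3$, combined with the uniqueness of primitive embeddings into the K3 lattice $U^3+E_8^2$, forces a primitive embedding $E_8^2\hookrightarrow\NS(X)$; the isometry exchanging the two $E_8$ summands (extended suitably to the whole lattice) is shown to be induced by an algebraic involution via the Torelli theorem; one checks it is a Nikulin involution; and the resolved quotient is identified as a Kummer surface by the Nikulin--Morrison criterion (its transcendental lattice $T_X(2)$ embeds primitively into $U(2)^3$). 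None of these steps appears in your outline, so what you have is a plan for a proof rather than a proof.
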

An equivalent criterion is that $X$ admits a (Nikulin) involution interchanging
two orthogonal copies of $E_8$ in $\NS(X)$, the unique unimodular even negative-definite lattice of rank 8.
Or even more abstractly: $E_8^2\hookrightarrow\NS(X)$.

In order to discuss sandwiching,
we return to the product type situation $A=E\times E'$ from \cite{SI}
that we alluded to above.
In \cite{Sandwich} Shioda noticed that this case comes automatically with a sandwich.
Namely $\Km(E\times E')$ itself possesses a Nikulin involution which gives rise to $X$,
thus extending the diagram \eqref{eq:SI} as follows:
 \begin{eqnarray}
 \label{eq:sw}
  \xymatrix{
  &&& \Km(E\times E') \ar@{-->}[dl]\\
  E\times E' \ar@{-->}[dr] && X\ar@{-->}[dl]&\\
 & \Km(E\times E')&&}
 \end{eqnarray}
This brings us to the problem whether every Shioda--Inose structure can be extended to a sandwich.
An affirmative answer was given recently by Ma in \cite{Ma}:

\begin{Theorem}[Ma]
Any Shioda--Inose structure admits a sandwich.
\end{Theorem}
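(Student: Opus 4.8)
The plan is to reduce the assertion to the construction of a single \emph{new} symplectic (Nikulin) involution, this time on $\Km(A)$, and then to identify its quotient via the Torelli theorem. Concretely, the given Shioda--Inose structure \eqref{eq:SI} already supplies the bottom map $X\dashrightarrow\Km(A)$, coming from a Nikulin involution $\iota_X$ on $X$ that interchanges two orthogonal copies of $E_8$ in $\NS(X)$, with $T_X=T_A$ and $T_{\Km(A)}=T_A(2)$. A sandwich as in \eqref{eq:sw} is nothing but a second degree-$2$ rational map $\Km(A)\dashrightarrow X$; since the quotient must again be a K3 surface, the map must be induced by a Nikulin involution $\iota'$ on $\Km(A)$ whose resolved quotient $\widetilde{\Km(A)/\iota'}$ is isomorphic to $X$. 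So the whole problem is to produce $\iota'$ together with the identification of its quotient.

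First I would extract the required lattice datum. Under $\iota_X$ the invariant and anti-invariant parts of $E_8\oplus E_8\subset\NS(X)$ are both the diagonal $\{(v,\pm v)\}\cong E_8(2)$, and by Nikulin's theorem the anti-invariant lattice of any Nikulin involution is exactly $E_8(2)$. Transporting the diagonal copy along $\pi_X$ yields a primitive embedding $E_8(2)\hookrightarrow\NS(\Km(A))$. Next I would invoke the existence/realisation results for Nikulin involutions (Nikulin, van Geemen--Sarti): a primitive embedding $E_8(2)\hookrightarrow\NS(Y)$ gives a lattice involution acting by $-1$ on $E_8(2)$ and by $+1$ on its orthogonal complement, and after composing with suitable reflections in $(-2)$-classes so that it preserves the ample cone, the Torelli theorem realises it by a genuine automorphism. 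Applied to $Y=\Km(A)$ this produces a candidate symplectic involution $\iota'$.

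It then remains to identify the quotient. For any Nikulin involution $\iota$ on a K3 surface $Y$ the degree-$2$ quotient map pulls $T_{\widetilde{Y/\iota}}$ back to a finite-index sublattice of $T_Y$ isometric to $T_{\widetilde{Y/\iota}}(2)$, and, since $\iota$ acts trivially on $H^{2,0}$, the periods correspond; thus $T_{\widetilde{Y/\iota}}\otimes\Q$ and $T_Y\otimes\Q$ are isomorphic Hodge structures. With $Y=\Km(A)$ and $T_{\Km(A)}=T_A(2)$, I would fix at the outset the Hodge isometry $T_X(2)\xrightarrow{\sim}T_{\Km(A)}$ (available because both lattices equal $T_A(2)$), build $\iota'$ so that its quotient realises precisely this isometry, and conclude that the quotient transcendental Hodge structure is $T_X=T_A$. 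The Torelli theorem then forces $\widetilde{\Km(A)/\iota'}\cong X$, which completes the diagram \eqref{eq:sw}.

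The \textbf{main obstacle} is the geometric effectivity of $\iota'$ and the simultaneous matching of the quotient with the \emph{prescribed} $X$, rather than with some surface merely Hodge-isometric to it. Two points must be controlled together: (i) the abstract lattice involution must descend to an honest symplectic automorphism with exactly the eight-point Nikulin fixed configuration, so one has to rule out that the anti-invariant $E_8(2)$ contains effective classes producing extra fixed curves, and to arrange that, after the correction by $(-2)$-reflections, the positive cone is preserved; and (ii) the marking has to be chosen so that the induced Hodge isometry $T_{\widetilde{\Km(A)/\iota'}}\cong T_X$ extends to an isometry of the full second cohomology taking ample cone to ample cone, as required for Torelli to yield an isomorphism of surfaces. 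It is precisely the interplay between the effectivity of $\iota'$ on $\Km(A)$ and this rigid identification of the quotient that requires the Hodge-theoretic input, and where I expect the real work of Ma's argument to lie.
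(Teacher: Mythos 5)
First, a point of reference: the paper does not prove this statement at all --- it is quoted from Ma \cite{Ma}, and the author explicitly remarks that Ma's proof is Hodge theoretic and gives no geometric information. So there is no in-paper argument to compare yours against; your proposal has to stand on its own, and as it stands it has genuine gaps at exactly the two places where the theorem's content lies.

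The first gap is the source of the primitive embedding $E_8(2)\hookrightarrow\NS(\Km(A))$. You propose to ``transport the diagonal copy along $\pi_X$,'' but push-forward along a degree-$2$ map multiplies the intersection form on invariant classes by $2$ (this is exactly why $T_{\Km(A)}=T_A(2)$), so the diagonal $E_8(2)\subset\NS(X)$ lands in $\NS(\Km(A))$ as a copy of $E_8(4)$, and neither primitivity nor the isometry type $E_8(2)$ of its saturation is automatic. Producing the correct primitive $E_8(2)$ inside $\NS(\Km(A))$ --- which for $\Km(E\times E')$ Shioda finds only via an explicit auxiliary elliptic fibration in \cite{Sandwich} --- is not a formality. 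The second and more serious gap is the identification of the quotient with the prescribed $X$: for a Nikulin involution $\iota'$ on $\Km(A)$ one only gets $T_{\widetilde{\Km(A)/\iota'}}(2)\hookrightarrow T_{\Km(A)}=T_X(2)$ as a finite-index (not necessarily full) sublattice compatible with periods, so a priori $T_{\widetilde{\Km(A)/\iota'}}$ is merely a finite-index overlattice-type relative of $T_X$; the sentence ``build $\iota'$ so that its quotient realises precisely this isometry'' assumes away the entire problem, and even granting a Hodge isometry of transcendental lattices, Torelli needs an isometry of all of $H^2$ matching ample cones. You correctly flag (ii) as the main obstacle but do not resolve it, while your worry (i) about extra fixed curves is actually a non-issue: by Nikulin's classification every symplectic involution on a K3 surface automatically has exactly eight isolated fixed points. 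In short, the outline is a reasonable description of the shape a lattice/Hodge-theoretic proof should take, but the two steps that constitute Ma's theorem are asserted rather than proved.
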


We emphasise that Ma's proof is Hodge theoretic in nature;
in particular it does not give any geometric information.
Indeed there are only five $N\in\N$ in terms of Theorem \ref{thm:Mo} for which an explicit geometric
construction has been exhibited:

\begin{tabular}{lll}
$N=1$ &
due to Kumar \cite{Kumar} & -- Kummer surfaces\\
&& \;\; of jacobians of genus 2 curves \\
$N=2$ & van Geemen--Sarti \cite{vGS} &\multirow{2}{*}{-- elliptic K3 surfaces with MW rank zero}\\
$N=3,5,7$ & 
due to Koike \cite{Koike} &
\\
\end{tabular}

In each case the quotients are constructed through elliptic fibrations with a 2-torsion section,
so that in fact $X$ and $\Km(A)$, interpreted as elliptic curves over $k(t)$, are related by an isogeny.
This will also be our preferred approach in the sequel.
Namely we will construct 3-dimensional families of elliptic fibrations with MW-rank 1 and 2-torsion sections exhibiting 
a Shioda-Inose structure.
As stated in Theorem \ref{thm},
this construction allows us to realise an infinite series of families of K3 surfaces
with sandwich structure
including the five families known before.
For two cases of small $N$ missing from the above list of examples ($N=4,8$)
we will also provide explicit equations for the families.

\section{First series}
\label{s1}

Our starting point is a 4-dimensional family of K3 surfaces 
whose generic member $X$ has 
\[
\NS(X) = U + E_7^2.
\]
This family which has also recently been investigated in \cite{CD},
can be given as an elliptic fibration with two singular fibres of Kodaira type $III^*$ at $0$ and $\infty$:
\begin{eqnarray}
\label{eq:def1}
X: \;\;\; y^2 = x^3 + t^3\,a(t)\,x+t^5 b(t),\;\;\;\; a(t), b(t) \in k[t] \text{ of degree } 2.
\end{eqnarray}
Here one can still rescale $(x,y)$ and separately $t$ to normalise 2 coefficients.
The hyperbolic plane $U\subset\NS(X)$ is spanned by the zero section $O$ and the general fibre $F$
while the $E_7$'s comprise fibre components disjoint from $O$.

The family is a natural starting point since it specialises to Kumar's family with $\NS=U+E_7+E_8$
by setting, for instance, $\deg(a)\leq 1$.
The transcendental lattice of $X$ can be computed with Nikulin's theory of the discriminant form 
(or as a 2-elementary lattice) as
\begin{eqnarray}
\label{eq:T1}
T_X = U^2 + A_1^2.
\end{eqnarray}
Here we consider the dual lattice $L^\vee$ of a non-degenerate even integral lattice $L$.
It gives rise to the discriminant group $L^\vee/L$, a finite abelian group of size the square of the discriminant of $L$.
The discriminant group comes with an induced quadratic form from $L$ which is denoted by 
\[
q_L: L^\vee/L \to \Q/2\Z.
\]
Presently we have isomorphisms of abelian groups
\[
E_7^\vee/E_7\cong A_1^\vee/A_1\cong \Z/2\Z.
\]
In fact there are generators of square $-3/2$ resp.~$-1/2$ which give a direct identification
\begin{eqnarray}
\label{eq:q1}
q_{E_7} \cong -q_{A_1}.
\end{eqnarray}

\subsection{Lattice enhancement}

A convenient way to specialise our 4-dimensional family of K3 surfaces
to a subfamily of Picard number $\rho\geq 17$
is to enhance $L=\NS$ by some vector $v$ of $T_X$ of negative square.
In this context the only subtlety consists in 
the primitive closure in the K3 lattice $\Lambda$:
\[
L':=\overline{\langle L,v\rangle} \subset \Lambda = H^2(X,\Z)=U^3+E_8^2.
\]
Then the theory of lattice polarised K3 surfaces guarantees 
that $L'$ corresponds to a 3-dimensional (sub)family 
consisting of K3 surfaces with generically $\NS=L'$
and transcendental lattice $T'=v^\bot\subset T_X$.
Since we are interested in transcendental lattices containing two copies of $U$ by Theorem \ref{thm:Mo},
we will always enhance $L$ with a vector from $A_1^2$ in the sequel.

\begin{Example}
\label{ex:N=1}
Take the vector $v=(1,0)\in A_1^2$, i.e.~a generator of a single $A_1$.
Via the isomorphism \eqref{eq:q1},
$v/2$ corresponds to a generator $w$ of $E_7^\vee/E_7$.
Thus we obtain 
\[
L':=\overline{\langle L,v\rangle} = \langle L, v/2+w\rangle.
\]
In fact, we have just glued together one copy of $E_7$ and $A_1$ each to the unimodular lattice $E_8$, so
$L'=U+E_7+E_8$.
\end{Example}

Since lattice enhancements involve the primitive closure,
we have to consider which integers $A_1^2$ represents primitively,
i.e.~by a vector $v=(v_1,v_2)$ with gcd$(v_1, v_2)=1$.
An easy exercise in quadratic forms gives

\begin{Lemma}
\label{lem1}
$A_1^2$ represents $-2N$ primitively if and only if
$N$ is a product of primes $\equiv 1\mod 4$ or twice such.
\end{Lemma}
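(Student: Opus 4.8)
The plan is to reduce the statement to the classical theory of sums of two squares. Since $A_1=\langle-2\rangle$, the quadratic form on $A_1^2$ sends $v=(v_1,v_2)$ to $v^2=-2(v_1^2+v_2^2)$. Hence $A_1^2$ represents $-2N$ by $v$ exactly when $N=v_1^2+v_2^2$, and this representation is primitive ($\gcd(v_1,v_2)=1$) if and only if $N$ is a \emph{primitive} sum of two squares. The lemma thus amounts to showing that $N$ is a primitive sum of two squares precisely when $4\nmid N$ and no prime $p\equiv3\mod4$ divides $N$; the latter condition is just a restatement of ``a product of primes $\equiv1\mod4$, or twice such'' (with the empty product giving $N=1$).

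For the necessity of this condition I would use only congruences. If a prime $p\equiv3\mod4$ divided $N$, then $v_1^2+v_2^2\equiv0\mod p$; as $-1$ is not a square modulo such $p$, this forces $p\mid v_1$ and $p\mid v_2$, against primitivity. If instead $4\mid N$, then $v_1^2+v_2^2\equiv0\mod4$, and since squares are $0$ or $1$ modulo $4$ this forces $v_1,v_2$ both even, again a contradiction. Together these rule out exactly the excluded integers $N$.

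For sufficiency I would pass to the Gaussian integers $\Z[i]$, writing $N=v_1^2+v_2^2=\mathrm{Nm}(v_1+iv_2)$ and noting that $\gcd(v_1,v_2)=1$ is equivalent to $v_1+iv_2$ being divisible by no rational prime. Given $N=2^{\varepsilon}\prod_i p_i^{a_i}$ with $\varepsilon\in\{0,1\}$ and all $p_i\equiv1\mod4$, I would factor $2=-i(1+i)^2$ and $p_i=\pi_i\bar\pi_i$, then set $\alpha=(1+i)^{\varepsilon}\prod_i\pi_i^{a_i}$. By construction $\mathrm{Nm}(\alpha)=N$, and $\alpha$ is primitive: the only rational primes that could divide it are $2$ and the $p_i$, but $(1+i)$ occurs at most once so $2\nmid\alpha$, and $p_i=\pi_i\bar\pi_i\nmid\alpha$ because $\bar\pi_i$ never occurs. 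Reading off the real and imaginary parts of $\alpha$ then yields the desired coprime pair $(v_1,v_2)$.

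The one delicate point is precisely this last bookkeeping: the existence of \emph{some} representation $N=v_1^2+v_2^2$ is the standard two-squares theorem, but to secure coprimality one must select a single prime from each conjugate pair $\{\pi_i,\bar\pi_i\}$ rather than a mixture---including both would reintroduce the rational factor $p_i$ and destroy primitivity. Everything else in the argument is routine.
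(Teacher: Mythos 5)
Your proof is correct: the reduction of primitive representations of $-2N$ by $A_1^2=\langle-2\rangle^2$ to primitive sums of two squares, the congruence argument for necessity, and the Gaussian-integer construction (choosing one prime from each conjugate pair $\{\pi_i,\bar\pi_i\}$) for sufficiency are all sound, and you correctly flag the coprimality bookkeeping as the only delicate point. The paper offers no proof at all, dismissing the lemma as ``an easy exercise in quadratic forms,'' and your argument is precisely the standard one it presumably has in mind.
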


We have already discussed the lattice enhancement for $N=1$ in Example \ref{ex:N=1}.
The other cases correspond to $v_1v_2\neq 0$ and not both even.
Let $w_1, w_2$ denote the generators of $E_7^\vee/E_7$ of square $-3/2$,
matching those for the $A_1$'s via \eqref{eq:q1}.
Then we find 
\[
L':=\overline{\langle L,v'\rangle} \;\; \text{ with } \;\; v'=v/2+\begin{cases}
w_1 & v_1 \text{ odd}\\
0 & v_1 \text{ even}
\end{cases}
+\begin{cases}
w_2 & v_2 \text{ odd}\\
0 & v_2 \text{ even}
\end{cases}
\]
Note that according to our set-up $v'$ is indeed an integral even vector: 
\[
2\Z\ni v'^2=-N/2-\begin{cases}
3/2 & N \text{ odd}\\
3 & N \text{ even}
\end{cases}
\]
If $N\neq 1$, then the additional algebraic class $v'$ can directly be identified with a section $P$ 
made orthogonal to $O$ and $F$ in $\NS$ while meeting one or both $III^*$ fibres in the non-identity component depending on the parity of $v_1$ and $v_2$.
In terms of the theory of Mordell-Weil lattices \cite{ShMW},
the contributions from $w_1, w_2$ to $v'^2$
correspond to correction terms for those fibre components.
In consequence $P$ has height $h(P)=N/2$.

\subsection{Transcendental lattice} 
By construction the subfamily with $L'\hookrightarrow \NS$ has generically 
transcendental lattice $T'=U^2+\langle-2N\rangle$.

\subsection{Alternate elliptic fibration}

We shall now switch to an alternative elliptic fibration on $X$ which comes with a 2-torsion section.
For this purpose we identify a divisor $D$ of Kodaira type $I_8^*$ supported on $O$ and the components of the $III^*$ fibres as depicted in the next figure.

\begin{figure}[ht!]
\setlength{\unitlength}{.45in}
\begin{picture}(7,5.2)(-1,-.5)
\thicklines

\multiput(0,3)(1,0){7}{\circle*{.1}}
\put(0,3){\line(1,0){6}}

\multiput(0,1)(1,0){7}{\circle*{.1}}
\put(0,1){\line(1,0){6}}

\put(5,1){\circle{.2}}
\put(5,3){\circle{.2}}

\put(3,4){\circle*{.1}}
\put(3,3){\line(0,1){1}}
\put(3,0){\circle*{.1}}
\put(3,0){\line(0,1){1}}




%


\put(0,2){\circle{.1}}
\put(0,1){\line(0,1){2}}
\put(0.1,2.1){$O$}


\thinlines

\put(-0.4,-.35){$u=\infty$}

\put(-0.5,-0.5){\framebox(5,5){}}

\end{picture}
\caption{$I_8^*$ Divisor  supported on two $III^*$'s and $O$}
\label{Fig}
\end{figure}
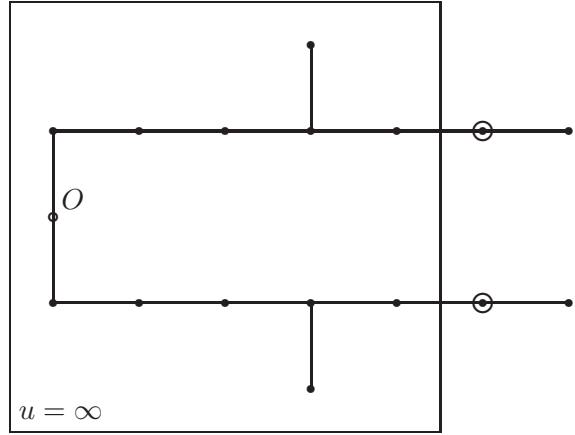

The linear system of this divisor $D$ will induce another  elliptic fibration $X\to\PP^1$.
 Here the rational curves adjacent to $D$ will serve as zero section and 2-torsion section.
 The latter claim is easily verified with the height pairing
 after realising that the two remaining components of the original $III^*$ fibres
 (disjoint from $I_8^*$)
 generically sit in two fibres of type $I_2$.
 Other than these, the new fibration has generically 6 fibres of type $I_1$.
 
 Explicitly the divisor $D$ can be extracted by the parameter $u=x/t^2$ with respect to \eqref{eq:def1}.
 Elementary transformations give the Weierstrass form
 \begin{eqnarray}
 \label{eq:def2}
X:\;\;\;  y^2 = t(u^3t^2+u\,a(t)+b(t)),
 \end{eqnarray}
 an elliptic curve over $k(u)$ with 2-torsion section $(0,0)$.
Upon lattice enhancement,
the rational curve $P$ gives a multisection for the alternate fibration
which induces a section $P'$ again of height $h(P')=N/2$.

\subsection{Kummer surface}

Consider the 2-isogenous elliptic surface $Y$ arising from the alternate elliptic fibration on $X$ by
quotienting by translation by $(0,0)$ and resolving singularities.
Generically $Y$ has singular fibres $I_4^*, 2\times I_1, 6\times I_2$ on top of the 2-torsion section.
The transcendental lattice can be computed as $T_Y = U(2)^2 + A_1^2$.
Now we consider the specialisation $Y'$ of $Y$ corresponding to the lattice enhancement of $X$ yielding the section $P$ (and $P'$).

\begin{Proposition}
\label{prop1}
Let $N$ be an odd integer as in Lemma \ref{lem1}. 
Then $Y'$ is a Kummer surface with $T_{Y'}=T_X(2)=U(2)^2+\langle-4N\rangle$.
\end{Proposition}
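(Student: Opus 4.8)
The plan is to prove the two assertions in turn: the precise shape $T_{Y'}=U(2)^2+\langle-4N\rangle$, and then, on the basis of it, that $Y'$ is a Kummer surface. Since $Y$ is obtained from the alternate fibration \eqref{eq:def2} by quotienting by the $2$-torsion section $(0,0)$, the section $P'$ produced by the lattice enhancement on $X$ descends to $Y'$, and I would begin by recording its height $h(P')=N/2$ together with the reducible fibres it meets. Generically $Y$ carries $I_4^*+6\,I_2$ over the $2$-torsion section, so $\Triv(Y)=U+D_8+A_1^6$ and $\MW(Y)=\Z/2$, in agreement with $T_Y=U(2)^2+A_1^2$; the enhancement raises the Mordell--Weil rank to one, the new generator being $P'$.

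To obtain $T_{Y'}$ I would feed this into Nikulin's discriminant-form calculus, exactly as \eqref{eq:T1} was derived. By Shioda--Tate, $\NS(Y')$ is the primitive closure of $\Triv(Y')\oplus\langle P'\rangle$; the local contributions to $h(P')=N/2$ fix which components of the reducible fibres $P'$ meets and hence the discriminant form of $\NS(Y')$, whose opposite is carried by the orthogonal complement $T_{Y'}\subset\Lambda$. The claim is that for odd $N$ this complement is forced to be $U(2)^2+\langle-4N\rangle$. As an independent consistency check, the degree-two isogeny $\pi\colon X\to Y'$ and its dual $\hat\pi$, satisfying $\hat\pi\circ\pi=[2]$, are compatible with the enhanced summand $\langle-2N\rangle\subset T_X$ doubling to $\langle-4N\rangle\subset T_{Y'}$, that is, with the expected Kummer relation $T_{Y'}=T_X(2)$.

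Granting the lattice, the Kummer property follows from Nikulin's lattice-theoretic characterisation of Kummer surfaces. Writing $T_{Y'}=L(2)$ with $L=U^2+\langle-2N\rangle$ an even lattice of signature $(2,3)$, I would realise $L$ as the transcendental lattice $T_A$ of an abelian surface $A$ carrying a polarisation of degree $2N$ --- the primitive embedding $L\hookrightarrow U^3$ together with surjectivity of the period map supply such an $A$ --- and then invoke Nikulin's criterion to identify $Y'\cong\Km(A)$, whence $T_{Y'}=T_A(2)$. The main obstacle is the discriminant-form step: one must check that the primitive closure of $\Triv(Y')\oplus\langle P'\rangle$, with the fibre contributions dictated by $h(P')=N/2$, genuinely produces a complement that is $2$-divisible of the stated form, and must see why this $2$-divisibility (equivalently, the applicability of Nikulin's criterion) demands $N$ odd rather than twice such a product. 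Geometrically the dichotomy is already visible on $X$: for odd $N$ the class $v'$ meets a single former $III^*$ fibre, whereas for even $N$ it meets both, and the latter spoils the Kummer discriminant form.
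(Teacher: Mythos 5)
Your overall strategy (compute $\NS(Y')$ via Shioda--Tate, pass to the orthogonal complement, then invoke the lattice-theoretic characterisation of Kummer surfaces) is the paper's strategy, but you have explicitly left the decisive step unresolved: you name ``the primitive closure of $\Triv(Y')\oplus\langle P'\rangle$'' and the question of why oddness of $N$ is needed as ``the main obstacle'' without supplying the argument. That obstacle \emph{is} the proof. The paper settles it with a short height-pairing parity argument: the section induced on $Y'$ is the pull-back $P^*$ of $P'$ under the dual isogeny, of height $h(P^*)=2h(P')=N$ (not $N/2$ --- see below). If $P^*$ were $2$-divisible in $\MW(Y')$, a section $Q$ with $2Q=P^*$ would have $h(Q)=N/4$; but every local correction term for the fibres $I_4^*$ and $I_2$ lies in $\tfrac12\Z$, so all heights lie in $\tfrac12\Z$, and $N/4\notin\tfrac12\Z$ precisely because $N$ is odd. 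Hence $\NS(Y')$ is generated by the trivial lattice, the torsion section and $P^*$, giving $\disc\NS(Y')=\tfrac{4\cdot 2^6}{2^2}\,h(P^*)=2^6N$; since the push-forward gives an equal-rank embedding $T_X(2)\hookrightarrow T_{Y'}$ and $\disc T_X(2)=2^6N$ as well, the two lattices coincide. Without this divisibility argument your ``discriminant-form step'' cannot be completed, and your proposed geometric explanation of the odd/even dichotomy (which $III^*$ fibres the class $v'$ meets on $X$) is not the operative mechanism: for even $N$ the construction fails because $P^*$ \emph{does} become $2$-divisible on $Y'$ (cf.\ the Remark following the Proposition), which changes the discriminant.

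A second, related slip: you treat $P'$ of height $N/2$ as the new Mordell--Weil generator on $Y'$. The section of height $N/2$ lives on $X$ (alternate fibration); a degree-$2$ isogeny doubles heights, so the generator on $Y'$ has height $N$. Feeding $N/2$ into the discriminant formula yields $2^5N$, which does not match $\disc T_X(2)=2^6N$, so the confusion is not cosmetic --- it would break the comparison of discriminants that closes the proof. Your treatment of the Kummer property itself (2-divisibility of $T_{Y'}$ plus rank $\ge 17$, via Nikulin/Morrison) is fine and matches the paper.
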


\begin{proof}
The property of being a Kummer surface follows 
from the 2-divisibility of $T_{Y'}$,
giving an even lattice of rank at least 17 as in Theorem \ref{thm:Mo}.
We shall now prove that the transcendental lattice takes exactly the shape as stated.

The enhanced section $P'$ pulls back to a section $P^*$ on $Y$ of height $h(P^*)=2h(P')=N$.
We claim that this section is not 2-divisible in $\MW(Y')$.
Otherwise there were a section $Q$ with $2Q=P$, so $h(Q)=N/4$.
But then, the correction terms in the height pairing are all half integers by inspection of the present singular fibres.
Hence $h(Q)\in\frac 12\Z$, giving the required contradiction.

It follows that $\NS(Y')$ is generated by fibre components, zero and 2-torsion sections together with $P^*$.
In particular we compute by \cite[(22)]{SSh}
\begin{eqnarray}
\label{eq:disc1}
\mbox{disc} \NS(Y') = \dfrac{4\times 2^6}{2^2} h(P^*) = 2^6 N.
\end{eqnarray}
Then we use that push-forward induces an embedding
\[
T_X(2) \hookrightarrow T_{Y'}
\]
(which need not be primitive in general, but both lattices have the same rank by \cite{Inose}; see \cite{SI}, \cite{Mo}).
By \eqref{eq:disc1} both lattices have the same discriminant,
hence they agree.
\end{proof}

\begin{Remark}
If $N$ is even, then quite on the contrary the section $P^*$ becomes 2-divisible in $\MW(Y')$
(by inspection of the 2-length, compare Remark \ref{rem}).
Hence the discriminants of $T_X(2)$ and $T_{Y'}$ do not match.
In fact, the two cases of lattice enhancements from Lemma \ref{lem1} are swapped:
odd $N$ on $X$ corresponds to $2N$ on $Y$,
but even $N$ on $X$ gives odd $N/2$ on $Y$.
\end{Remark}

\subsection{Proof of Theorem \ref{thm} (1)}
\label{ss1}

Let $N$ be composed of primes $\equiv 1 \mod 4$ as in Theorem \ref{thm} (1).
Let $X'$ be the family of K3 surfaces 
given by the lattice enhancement of $X$ corresponding to $N$.
Then any $X'$ with $\rho(X')=17$ fits into a sandwiched Shioda--Inose structure by
Proposition \ref{prop1}
using the 2-isogeny and its dual.
Thus we only have to rule out that the construction degenerates for higher Picard number.

Note that any non-degenerate member of the family 
comes with the corresponding elliptic fibration with 2-torsion section (possibly with different singular fibres).
It remains to show that the quotient does always have 2-divisible transcendental lattice,
regardless of the Picard number.
This follows from the next lemma. \qed

\begin{Lemma}
\label{lem0}
Let $(X,\iota)$ be a family of K3 surfaces with Nikulin involution $\iota$.
Assume that the resolution $Y$ of the quotient $X/\iota$ generically has transcendental lattice
\[
T_Y = T_X(2).
\]
Then the same applies to any non-degenerate specialisation of $(X,\iota)$.
\end{Lemma}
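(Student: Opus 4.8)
The plan is to phrase the hypothesis $T_Y=T_X(2)$ as the surjectivity of a pushforward map on transcendental lattices, and then to show that this surjectivity is inherited by every non-degenerate specialisation through a single primitivity argument. Throughout I identify $T_X$ with $T_{\tilde X}$, where $\tilde X$ is the blow-up of the eight fixed points of $\iota$, so that the degree-two quotient $\tilde\pi\colon\tilde X\to Y$ furnishes the relevant correspondence.

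First I would record the formal properties of $\pi_*\colon T_X\to T_Y$ and $\pi^*\colon T_Y\to T_X$. Since $\iota$ is symplectic it acts trivially on $T_X$, so $\pi^*\pi_*=1+\iota^*=2$ on $T_X$, while $\pi_*\pi^*=\deg\tilde\pi=2$ on $T_Y$; the projection formula gives $\langle\pi_*a,\pi_*b\rangle=2\langle a,b\rangle$, so $\pi_*$ realises the isometric embedding $T_X(2)\hookrightarrow T_Y$. Two further points are crucial: these maps are morphisms of the local systems underlying the two families, hence restrict compatibly to the transcendental lattice of each fibre; and $\pi_*$ (resp.\ $\pi^*$) carries transcendental classes to transcendental classes, the latter being an immediate consequence of the projection formula applied against algebraic classes. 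The hypothesis then reads $\pi_*T_{X_\eta}=T_{Y_\eta}$ for the generic fibre.

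Next I would put in the specialisation data. As the Picard number can only grow, specialisation yields primitive inclusions $T_{X_0}\hookrightarrow T_{X_\eta}$ and $T_{Y_0}\hookrightarrow T_{Y_\eta}$ inside the respective K3 lattices, and non-degeneracy guarantees that $\iota$ persists as a Nikulin involution with $Y_0=\widetilde{X_0/\iota}$. The heart of the argument is then short: given $y\in T_{Y_0}\subseteq T_{Y_\eta}$, generic surjectivity lets me write $y=\pi_*x$ with $x\in T_{X_\eta}$; applying $\pi^*$ gives $2x=\pi^*y$, which lies in $T_{X_0}$ because $y\in T_{Y_0}$ and $\pi^*$ sends the special transcendental lattice into $T_{X_0}$. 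Primitivity of $T_{X_0}$ in $T_{X_\eta}$ forces $x\in T_{X_0}$, whence $y\in\pi_*T_{X_0}$. Combined with the obvious inclusion $\pi_*T_{X_0}\subseteq T_{Y_0}$ this yields $T_{Y_0}=\pi_*T_{X_0}\cong T_{X_0}(2)$, as desired.

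I expect the only real subtlety to lie not in this final computation but in justifying that $\pi_*$ and $\pi^*$ are genuinely the same lattice maps on every fibre — that is, that the Nikulin involution and its quotient exist in families and induce monodromy-invariant maps of local systems — together with the verification that both maps preserve the transcendental parts fibrewise. Once these structural points are in place, the primitivity step is entirely formal and requires no control of the individual specialisations.
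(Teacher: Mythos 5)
Your proof is correct and follows the same basic skeleton as the paper's: both arguments rest on the primitive specialisation embeddings $T_{X'}\hookrightarrow T_X$, $T_{Y'}\hookrightarrow T_Y$, the push-forward embedding $T_{X'}(2)\hookrightarrow T_{Y'}$, and the generic identity $T_Y=T_X(2)$. The one genuine difference is the concluding step. The paper forms the commutative square of inclusions, observes that $T_{X'}(2)\hookrightarrow T_X(2)=T_Y$ is primitive, deduces that $T_{X'}(2)$ is primitive in $T_{Y'}$, and then invokes the equality of ranks (citing Inose for $\rho(X')=\rho(Y')$) to upgrade a primitive finite-index sublattice to the whole lattice. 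You instead run an element chase: given $y\in T_{Y'}$, generic surjectivity of $\pi_*$ produces $x\in T_X$ with $\pi_*x=y$, the identity $\pi^*\pi_*=1+\iota^*=2$ on $T_X$ gives $2x=\pi^*y\in T_{X'}$, and primitivity of $T_{X'}$ in $T_X$ forces $x\in T_{X'}$, so $T_{Y'}=\pi_*T_{X'}\cong T_{X'}(2)$. Note that the two uses of primitivity are really the same fact (torsion-freeness of $T_X/T_{X'}$), but your version buys a small simplification: it establishes surjectivity directly and so does not need the rank comparison from Inose as a separate input, at the cost of having to justify carefully (as you do) that $\pi_*$ and $\pi^*$ are maps of local systems preserving the transcendental parts fibrewise --- which is exactly what the paper's diagram encodes implicitly.
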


\begin{proof}
Let $(X',\iota')$ be a non-degenerate specialisation with quotient $Y'$.
Naturally these come with primitive embeddings
\[
T_{X'} \hookrightarrow T_X,\;\;\;\;\; T_{Y'} \hookrightarrow T_Y.
\]
Using the push-forward via the quotient map and the assumption from the lemma,
we obtain the diagram
$$
\begin{array}{ccc}
T_{Y'} & \hookrightarrow & T_Y\\
\cup && \| \\
T_{X'}(2) & \hookrightarrow & T_X(2)
\end{array}
$$
Going through the lower right corner, we find that $T_{X'}(2)$ embeds primitively into $T_Y$.
Arguing with the upper left corner, 
we deduce the same for the inclusion  $T_{X'}(2)\subset T_Y$.
Since these lattices have the same rank by \cite{Inose}, we deduce equality.
\end{proof}

\subsection{Optimality of construction}

\label{ss:further}

The alert reader might wonder whether 
we might not be able to derive more K3 surfaces from our construction
by not limiting ourselves to the summand $A_1^2$ of $T$ in Lemma \ref{lem1}.
Indeed the transcendental lattice $T$ does represent any integer (in many ways).
However, recall that in essence we aim at primitive embeddings 
\begin{eqnarray}
\label{eq:prim}
U^2+\langle-2N\rangle\hookrightarrow T.
\end{eqnarray}
Here we explain that our construction  does not give rise to any subfamilies with
such transcendental lattice other than those stated in Theorem \ref{thm}.
For this purpose, we first embed the summand $U^2$ into $T$.
In general, the orthogonal complement $M=(U^2)^\bot\subset T$
need not be unique,
but for sure the genus of $M$ is fixed by $T$.
Presently, the genus of $M$ consists of a single lattice
by class group theory.
Hence in order to study primitive embeddings \eqref{eq:prim}
 it does indeed suffice to consider representations of $-2N$ by  $A_1^2$.
Verbatim, the same argument will go through for the other two constructions 
(see Lemma \ref{lem2}, \ref{lem3}).

\subsection{Other lattice enhancements}

For the sake of completeness,
we briefly comment on other lattice enhancements of the family $X$.
This will also serve as a  sanity check of the above argument.
Analogous arguments apply to the K3 families in the next two sections.

Let $X'$ be a family of K3 surfaces of Picard number $17$ and discriminant $2N (N>1)$
that  arises from $X$ by lattice enhancement.
As before, $X'$ comes with an additional section $P$ of height $N/2$,
say with respect to the original elliptic vibration.
Consider the case where $N$ is odd.
Then $P$ meets some non-identity fibre components,
but the section $2P$ of height $2N$ does not.
Let $\varphi$ denote the orthogonal projection with respect to $O,F$ in $\NS(X')$.
By assumption $\varphi(2P)$ is orthogonal to the trivial lattice
(i.e.~to the image of $\NS(X)$ in $\NS(X')$), and we have
\[
\varphi(2P)^2 = -2N,\;\;\; \varphi(2P).\varphi(P) = -N.
\]
In particular we find that $\frac 1N\varphi(2P)\in \NS(X')^\vee$.
In the discriminant group $\NS(X')^\vee/\NS(X')$
this induces an element of order $N$
where the discriminant form evaluates as $-2/N$.

Now assume that $T(X')=U^2+\langle-2N\rangle$ is one of the transcendental lattices in question.
Its discriminant form also evaluates as $-2/N$ at an element of order $N$.
But then for $\NS(X')$ and $T(X')$ to be orthogonal complements in the K3 lattice,
we require that their discriminant forms have reversed signs.
In particular this implies
that $-1$ is a square modulo any prime divisor of $N$.
Thus we find exactly the conditions of Theorem \ref{thm}.

For $N$ even, we distinguish two further cases by the parity of $M=N/2$.
If $N/2$ is odd, then essentially the same argument as above goes through
with the element  $\frac 1M\varphi(P)\in\NS(X')^\vee$ of order $N$ in the discriminant group.
If $N/2$ is even, then $P$ is in the narrow Mordell-Weil lattice, meeting all singular fibres at the identity component.
Thus $\NS(X') = \NS(X) + \langle\varphi(P)\rangle$,
and the discriminant group has 2-length 3 exceeding that of $U^2+\langle-2N\rangle$,
contradiction.

\section{Second series}
\label{s2}

In this and the next section,
we shall argue directly with elliptic fibrations with 2-torsion section,
this time semi-stable.
Such fibrations admit an extended Weierstrass form
\[
y^2 = x(x^2+a(t)x+b(t))
\]
with reducible fibres at the zeros of $b(t)$.
We treat two families which lend themselves directly to Shioda--Inose stuctures.
In this section, we consider surfaces with singular fibres of Kodaira type $I_{14}, I_2$.
The corresponding K3 surfaces form a 4-dimensional family
\begin{eqnarray}
\label{eq14}
X:\;\;\; 
y^2 = x(x^2+a(t)x+t)
\end{eqnarray}
where $a\in k[t]$ has degree 4 and does not vanish at $t=0$.
Generically $\rho(X)=16$.

\subsection{Transcendental lattice}

The transcendental lattice can be read off directly from an alternative elliptic fibration on $X$.
Namely it is easy to extract a divisor of Kodaira type $II^*$ from $I_{14}$ extended by $O$ and the identity component of $I_2$. 
The remaining components of $I_{14}$ give rise to a section and an $A_6$ configuration of rational curves.
Comparing ranks (and discriminants),
we find generically
\[
\NS(X) = U+A_6+E_8 \;\;\text{ and } \;\;\; T_X = U^2 + \begin{pmatrix}-2 & -1\\-1 & -4\end{pmatrix}.
\]
The latter representation is easily verified with the discriminant form
since $A_6^\vee/A_6$ has a generator of square $-6/7$.

\subsection{Quotient family}

Consider the quotient by translation by  the 2-torsion section and 
denote the resulting elliptic K3 surfaces by $Y$.
Then generically $Y$ has singular fibres $I_7, I_1, 8\times I_2$ and $\MW\cong \Z/2\Z$.

\begin{Lemma}
\label{lem2}
Generically $Y$ has transcendental lattice $T_Y\cong T_X(2)$.
\end{Lemma}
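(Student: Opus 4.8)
The plan is to follow the template of Proposition \ref{prop1} verbatim: exhibit a finite-index embedding $T_X(2)\hookrightarrow T_Y$ of lattices of equal rank, and then force it to be an isomorphism by comparing discriminants. First I would invoke the push-forward along the degree-$2$ quotient map $X\to Y$, which induces an embedding $T_X(2)\hookrightarrow T_Y$; by \cite{Inose} (see also \cite{SI}, \cite{Mo}) the two transcendental lattices have the same rank, so this embedding has finite index. Since a finite-index sublattice $L\subset L'$ satisfies $\disc L=[L':L]^2\,\disc L'$, equality of discriminants will pin the index down to $1$ and hence give $T_Y\cong T_X(2)$.

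Next I would compute both discriminants. On the $X$-side, $T_X=U^2+\begin{pmatrix}-2&-1\\-1&-4\end{pmatrix}$ has $|\disc T_X|=8-1=7$, since $U^2$ is unimodular and the displayed block has determinant $7$. As $T_X$ has rank $6$, rescaling the form by $2$ multiplies the discriminant by $2^6$, so $|\disc T_X(2)|=2^6\cdot 7$.

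On the $Y$-side I would read off the discriminant of $\NS(Y)$ from the singular fibres $I_7, I_1, 8\times I_2$ together with $\MW(Y)\cong\Z/2\Z$ via the formula \cite[(22)]{SSh}. The reducible fibres contribute the root lattices $A_6$ (from $I_7$) and eight copies of $A_1$ (from the $I_2$'s), with $\disc A_6=7$ and $\disc A_1=2$; the Mordell--Weil rank is $0$, so there is no height-pairing factor, and the $2$-torsion enters through $|\MW(Y)_{\mathrm{tor}}|^2=2^2$. This gives
\[
|\disc\NS(Y)|=\frac{7\cdot 2^8}{2^2}=2^6\cdot 7.
\]
Since $\NS(Y)$ and $T_Y$ are orthogonal complements in the unimodular K3 lattice, $|\disc T_Y|=|\disc\NS(Y)|=2^6\cdot 7$, matching $|\disc T_X(2)|$, so the embedding above is an isomorphism.

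The genuinely substantive point---exactly as in Proposition \ref{prop1}---is the rank coincidence underlying the push-forward embedding, for which I would lean on \cite{Inose}; this is what makes a mere discriminant comparison sufficient rather than requiring control of the full discriminant form. By contrast, the fibre and torsion bookkeeping feeding \cite[(22)]{SSh} is routine once the singular fibres of $Y$ are known, so I do not expect any obstacle there beyond taking care with the factor $|\MW(Y)_{\mathrm{tor}}|^2$.
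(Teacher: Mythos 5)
Your proposal is correct and follows essentially the same route as the paper: the paper likewise uses Inose's equality of Picard numbers (hence of transcendental ranks), computes $\disc\NS(Y)=2^6\cdot 7$ from the fibres $I_7, I_1, 8\times I_2$ and the $2$-torsion, and concludes that the push-forward embedding $T_X(2)\hookrightarrow T_Y$ is an isometry by matching discriminants. The only cosmetic difference is that the paper re-derives MW-rank $0$ and the absence of extra torsion inside the proof via Shioda--Tate, whereas you take the stated $\MW(Y)\cong\Z/2\Z$ as input; the content is the same.
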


\begin{proof}
By \cite{Inose} $Y$ also has Picard number 16,
so the MW-rank is 0 by the Shioda-Tate formula.
Standard formulas exclude any further torsion.
Hence $\NS(Y)$ is generated by fibre components and the two torsion sections;
in particular $\NS(Y)$ has discriminant $2^67$.
Then, as before, the push-forward embedding $T_X(2)\hookrightarrow T_Y$ is an isometry.
\end{proof}

\subsection{Lattice enhancement}

As before we enhance $\NS(X)$ by a vector from the last summand of $T_X$.
Generally we find the following possibilities:

\begin{Lemma}
\label{lem2}
$ \begin{pmatrix}-2 & -1\\-1 & -4\end{pmatrix}$ represents $-2N$ primitively if and only if
$N$ is a product of primes $\equiv 1,2,4\mod 7$ or seven times such a product.
\end{Lemma}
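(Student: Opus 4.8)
The plan is to determine exactly which integers $-2N$ are primitively represented by the binary quadratic form
\[
q(x,y) = \begin{pmatrix}-2 & -1\\-1 & -4\end{pmatrix},
\]
i.e.~$q(x,y) = -2x^2 - 2xy - 4y^2$. First I would pass to the positive-definite form $f(x,y) = 2x^2 + 2xy + 4y^2$ and compute its discriminant, which is $b^2-4ac = 4 - 32 = -28$. Thus the question reduces to: which integers $2N$ are primitively represented by a form of discriminant $-28$, and more precisely by \emph{this particular} form in its class. The key structural input is class group theory for discriminant $-28$: the form class group here has order $2$, represented by the principal form $x^2+7y^2$ (discriminant $-28$) and our form $2x^2+2xy+4y^2$, which is the non-principal class.

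The central step is the classical theory of representation by genera and classes. An odd prime $p\nmid 28$ is represented by \emph{some} form of discriminant $-28$ precisely when $-28$ (equivalently $-7$) is a quadratic residue mod $p$, which by quadratic reciprocity is the condition $p\equiv 1,2,4\bmod 7$ (the squares mod $7$). Since discriminant $-28$ has two classes but, I expect, only \emph{one} genus, the genus character cannot distinguish the two classes, so one must argue at the level of individual classes which primes our specific form represents versus the principal form. Here I would invoke that the two classes $x^2+7y^2$ and $2x^2+2xy+4y^2$ are related: a prime $p\equiv 1,2,4\bmod 7$ is represented primitively by $2x^2+2xy+4y^2$ exactly in the complementary situation to $x^2+7y^2$, and composition (the non-trivial class squares to the principal class) governs which products land in which class. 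Concretely, $2$ itself is primitively represented by our form ($x=1,y=0$ gives $2$), while $7$ is represented as $2\cdot 1^2 + 2\cdot 1\cdot 1 + 4\cdot 1^2 = 8$ fails but $7 = 2\cdot(-1)^2+2(-1)(-2)+4(-2)^2$ must be checked — the correct statement being that $N$ is a product of primes $\equiv 1,2,4\bmod 7$ or seven times such.

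The plan concludes by assembling these local-global pieces multiplicatively. I would establish that the set of integers primitively represented by our form is multiplicatively built from: (i) primes $p\equiv 1,2,4\bmod 7$, each of which is represented (primitively, after tracking the class via composition), and (ii) the prime $7$, which contributes the ``seven times such a product'' alternative. Primes $p\equiv 3,5,6\bmod 7$ are \emph{not} represented since $-7$ is a non-residue, and they can only enter to even powers (non-primitively), so they are excluded from primitive representations. The main obstacle I anticipate is the class-separation step: because discriminant $-28$ has a single genus with two classes, genus theory alone does not tell us which of $x^2+7y^2$ and $2x^2+2xy+4y^2$ represents a given admissible prime, so I must track representations through the group law on forms (Gauss composition) rather than through congruence conditions alone. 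Carefully verifying that the non-principal class $2x^2+2xy+4y^2$ accounts for exactly the stated set $N$ — and in particular pinning down the role of the prime $7$ and the parity-free multiplicative structure — is where the argument requires genuine care, though it remains an elementary, if delicate, exercise in the theory of binary quadratic forms.
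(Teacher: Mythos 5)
The paper itself gives no proof of this lemma (it is stated as the outcome of a routine analysis of the quadratic form), so the only question is whether your route works --- and as set up, it does not. The central problem is that you have misidentified the arithmetic of the form. The form $2x^2+2xy+4y^2$ is \emph{imprimitive}: its content is $2$, since $2x^2+2xy+4y^2=2(x^2+xy+2y^2)$. It therefore does not lie in the form class group of discriminant $-28$ at all, and moreover $h(-28)=1$ (the only reduced \emph{primitive} form of discriminant $-28$ is $x^2+7y^2$). So your key structural claim --- that there are two classes in one genus, with $2x^2+2xy+4y^2$ the non-principal one, to be separated by Gauss composition --- is false, and the ``class-separation step'' you flag as the main difficulty is a phantom: the machinery you propose to deploy has nothing to act on. A symptom of the same confusion is your numerical check concerning $7$: the form $2x^2+2xy+4y^2$ represents only even integers, so it certainly does not represent $7$; the relevant target is $-2\cdot 7=-14$, achieved primitively via $x^2+xy+2y^2=7$ at $(x,y)=(1,-2)$.

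The observation that the form is twice a primitive form is also the key to a correct (and short) proof. Primitive representation of $-2N$ by $-2(x^2+xy+2y^2)$ is the same as primitive representation of $N$ by $x^2+xy+2y^2$, the principal and unique reduced form of discriminant $-7$. Since $h(-7)=1$, the integers primitively represented are exactly the norms of primitive ideals of $\Z[(1+\sqrt{-7})/2]$: products of split primes (those with $\left(\tfrac{-7}{p}\right)=1$, i.e.\ $p\equiv 1,2,4\bmod 7$, including $p=2$ since $-7\equiv 1\bmod 8$), times at most one factor of the ramified prime $7$ (as $\mathfrak p_7^2=(7)$ is no longer primitive), with inert primes $p\equiv 3,5,6\bmod 7$ excluded entirely. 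This yields precisely the stated condition. Your final multiplicative assembly and the exclusion of non-residue primes are in the right spirit, but the argument needs to be rebuilt on discriminant $-7$ with class number one rather than on a nonexistent two-class group of discriminant $-28$.
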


Now pick a vector $v$ as in Lemma \ref{lem2}
and enhance $\NS$ by a generator of $v^\bot$ in the above rank 2 lattice.
We infer the transcendental lattice 
\[
T'=U^2+\Z v = U^2+\langle -2N\rangle.
\]
On the elliptic fibrations this implies an additional singular fibre ($N=1$) or a section $P$ of height $2N/7$ ($N>1$).

\subsection{Proof of Theorem \ref{thm} (2)}
\label{ss2}

Let $N$ be  as in Theorem \ref{thm} (2) (or equivalently Lemma \ref{lem2}).
Let $X'$ be a member of the subfamily of K3 surfaces 
given by the lattice enhancement of $X$ corresponding to $N$ as above.
Then by assumption $T_{X'}$ embeds primitively into $U^2+\langle-2N\rangle$.
Moreover the quotient surface has transcendental lattice $T_{X'}(2)$ by Lemma \ref{lem0}
in conjunction with Lemma \ref{lem2}.
Using the 2-isogeny and its dual,
 these surface realise a sandwiched Shioda--Inose structure. \qed

\begin{Remark}
\label{rem}
The above construction implies that the image of the section $P$ on the quotient surface
becomes 2-divisible. 
This fact can also be checked directly on the quotient surface
by an argument comparing the 2-length of $\NS$ to the rank of the transcendental lattice.
\end{Remark}


\subsection{Example: $N=4$}

We need to endow the elliptic fibration \eqref{eq14} with a section of height $8/7$.
Up to translation by the 2-torsion section and inversion, this is uniquely achieved by a section $P$ meeting $I_{14}$ at $\Theta_4$ (the fourth component) and $I_2$ at the identity component, but not intersecting $O$.
Indeed the height pairing gives $h(P)=4-\frac{4\cdot10}{14}=\frac 87$.

With the present extended Weierstrass form \eqref{eq14},
$P$ can only take the shape $(\alpha, w)$ with $\alpha\in k^*$ and $w\in k[t]$ of degree 2.
But then the pair $(\alpha, w)$ uniquely determines the polynomial $a(t)$ in \eqref{eq14} by
\[
w^2=\alpha^3+\alpha^2 a(t) + \alpha t.
\]
Hence we obtain the 3-dimensional family of K3 surfaces 
with generically $T=U^2+\langle-8\rangle$
(and the unirationality of the corresponding moduli space).

\section{Third series}
\label{s3}

As third series we treat elliptic K3 surfaces with 2-torsion section 
and singular fibres of type $I_{10}, I_6$.
Here the Weierstrass form can be transformed to
\begin{eqnarray}
\label{eq10}
X:\;\;\; 
y^2 = x(x^2+a(t)x+t^3),
\end{eqnarray}
again yielding a 4-dimensional family with $\rho=16$ generically.
The arguments are very similar to the previous family,
so we only outline the main ideas.

\subsection{Transcendental lattice}

With the discriminant form one finds the transcendental lattice generically as
\[
T_X = U^2 + \begin{pmatrix}-4 & -1\\-1 & -4\end{pmatrix}.
\]
The computations may be eased by switching to another elliptic fibration,
for instance with fibres of type $III^*$ and $IV^*$ and a section of height $5/2$
(elliptic parameter $u=x/t$).
This gives a representation of $q_{\NS(X)}$ as $\Z/3\Z(-4/3) + \Z/5\Z(-2/5)$
which agrees with $q_{T_X}$ up to sign.

\subsection{Quotient family}

Denote the quotient elliptic surfaces with the 2-torsion section by $Y$.
Then generically $Y$ has singular fibres $I_5, I_3, 8\times I_2$ and $\MW\cong \Z/2\Z$.

\begin{Lemma} 
\label{lem3}
Generically $Y$ has transcendental lattice $T_Y\cong T_X(2)$.
\end{Lemma}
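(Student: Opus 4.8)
The plan is to reproduce \emph{verbatim} the argument used for the second series, since the numerics again conspire to make the push-forward embedding an isometry. First I would invoke \cite{Inose} to conclude that the $2$-isogenous surface $Y$ again has Picard number $16$. By the Shioda--Tate formula this forces the Mordell--Weil rank to be zero, so $\MW(Y)$ is purely torsion; standard bounds coming from the fibre configuration $I_5, I_3, 8\times I_2$ then rule out any torsion beyond the $2$-torsion section already present, giving $\MW(Y)\cong\Z/2\Z$. Consequently $\NS(Y)$ is generated by the trivial lattice together with the zero and $2$-torsion sections.

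Next I would read off $|\disc\NS(Y)|$ from this description. The reducible fibres contribute the root lattices $A_4, A_2$ and $A_1^8$, of discriminants $5,3$ and $2^8$; dividing by the square of the torsion order as in \cite[(22)]{SSh} yields
\[
|\disc\NS(Y)| = \frac{5\cdot 3\cdot 2^8}{2^2} = 2^6\cdot 15.
\]
The quotient (Nikulin) map then furnishes a push-forward embedding $T_X(2)\hookrightarrow T_Y$ of lattices of equal rank $6$, again by \cite{Inose}. Since the rank-two block $\begin{pmatrix}-4 & -1\\-1 & -4\end{pmatrix}$ has determinant $15$, we get $|\disc T_X(2)| = 2^6\cdot 15 = |\disc T_Y|$, so equal rank together with equal discriminant forces the primitive embedding to be an isometry and hence $T_Y\cong T_X(2)$.

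The discriminant arithmetic and the formal isometry conclusion are entirely routine; the step demanding genuine care is the lattice bookkeeping that precedes them. Concretely, the main obstacle is confirming that for a \emph{generic} member of the family the $2$-isogeny really produces the fibre configuration $I_5, I_3, 8\times I_2$, and, above all, that no extra torsion section slips in. An unnoticed $4$-torsion point would alter the index of $\Triv(Y)$ in $\NS(Y)$ and so spoil the discriminant computation. Once the generation of $\NS(Y)$ is secured, everything else follows mechanically, exactly as for the $I_{14},I_2$ family.
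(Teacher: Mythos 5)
Your proposal is correct and follows essentially the same route as the paper: the paper's own (very terse) proof likewise reads off $\disc\NS(Y)=2^6\cdot 15$ from the fibre configuration $I_5, I_3, 8\times I_2$ and the $2$-torsion section, and then concludes that the equal-rank push-forward embedding $T_X(2)\hookrightarrow T_Y$ must be an isometry because the discriminants agree. Your fuller bookkeeping (Picard number $16$ via \cite{Inose}, Mordell--Weil rank $0$ by Shioda--Tate, exclusion of extra torsion, the formula from \cite[(22)]{SSh}) is exactly the argument the paper carries out explicitly for the second series and merely cites here with ``as before.''
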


\begin{proof}
By inspection of the singular fibres and torsion section  $\NS(Y)$ has discriminant $2^615$.
Then, as before, the push-forward embedding $T_X(2)\hookrightarrow T_Y$ is an isometry.
\end{proof}

\subsection{Lattice enhancement}

We continue by enhancing $\NS(X)$ by a vector from the last summand of $T_X$.
The analysis of the possible numbers which are represented primitively is a little more delicate.
One can already observe this from the fact that each $2, 3$ and $5$ is represented,
but neither $6, 10$ or $15$.
This is related to the fact that the corresponding quadratic form
gives the 2-torsion class in the class group $Cl(-15)$.
This explains why a parity condition enters for the representations:

\begin{Lemma}
\label{lem3}
An integer $-2N$ is represented primitively by $\begin{pmatrix}-4 & -1\\-1 & -4\end{pmatrix}$ if and only if
\begin{itemize}
\item
$N$ is a product of an odd number of primes $\equiv 2,8\mod 15$ or $15$ times such a product;
\item
$N$ is 3 or 5 times a product of an even number of primes $\equiv 2,8\mod 15$.
\end{itemize}
\end{Lemma}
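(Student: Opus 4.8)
The plan is to reduce the matrix representation to a primitive representation problem for a positive-definite binary quadratic form, and then to settle it by genus theory and composition in the class group $Cl(-15)$. Writing a lattice vector as $v=(v_1,v_2)$, the equation $v^2=-2N$ for the Gram matrix $\left(\begin{smallmatrix}-4&-1\\-1&-4\end{smallmatrix}\right)$ becomes $4v_1^2+2v_1v_2+4v_2^2=2N$, that is
\[
f(v_1,v_2):=2v_1^2+v_1v_2+2v_2^2=N,
\]
with a primitive representation corresponding to $\gcd(v_1,v_2)=1$. Thus I must describe the integers primitively represented by $f$. This form has fundamental discriminant $1-16=-15$ and is, together with the principal form $g(x,y)=x^2+xy+4y^2$, one of the two reduced forms of that discriminant; hence $Cl(-15)\cong\Z/2\Z$ with $[f]$ the non-trivial class. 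Since $-15$ has two prime discriminant factors, the number of genera is $2$, equal to the class number, so each genus is a single class and representation is controlled by the genus characters $\chi_3=\left(\tfrac{\cdot}{3}\right)$ and $\chi_5=\left(\tfrac{\cdot}{5}\right)$.

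First I would classify primes by the class of their divisors in $K=\Q(\sqrt{-15})$. For $p\nmid15$, quadratic reciprocity yields $\left(\tfrac{-15}{p}\right)=\chi_3(p)\chi_5(p)$, so $p$ splits exactly when $\chi_3(p)=\chi_5(p)$. The principal genus is $\chi_3=\chi_5=+1$, i.e.\ $p\equiv1,4\pmod{15}$, giving class $[g]$; the values $\chi_3=\chi_5=-1$, i.e.\ $p\equiv2,8\pmod{15}$, give the non-trivial class $[f]$; and $p\equiv7,11,13,14\pmod{15}$ have $\chi_3(p)\neq\chi_5(p)$, so $p$ is inert and represented by no form of discriminant $-15$. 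The ramified primes are done by hand: $3=f(1,-1)$ and $5=f(1,1)$ place the prime over each of $3,5$ in $[f]$, whence the prime over $15$ lies in $[f]^2=[g]$.

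I would then pass to composite $N$ through the correspondence between primitive representations and primitive ideals: a primitive representation of $N$ by a form of discriminant $-15$ amounts to a primitive ideal $\mathfrak a\subset\OO_K$ of norm $N$, and the representing form lies in $[\mathfrak a]$. Factoring $N=\prod_ip_i^{e_i}$, such an $\mathfrak a$ exists if and only if no $p_i$ is inert and the ramified primes $3,5$ occur to order at most $1$; otherwise every ideal of norm $N$ is divisible by some $(p_i)$. Each split $p_i$ contributes $\mathfrak p_i^{e_i}$ or $\overline{\mathfrak p}_i^{\,e_i}$ and each ramified one its unique prime, so $[\mathfrak a]=\sum_ie_i[\mathfrak p_i]$; as $Cl(-15)\cong\Z/2\Z$ forces $[\overline{\mathfrak p}_i]=[\mathfrak p_i]$, this class is independent of every choice. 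Hence $f$ represents $N$ precisely when $\sum_ie_i[\mathfrak p_i]$ is non-trivial, i.e.\ when the number of factors from $\{p\equiv2,8\}\cup\{3,5\}$, counted with multiplicity, is odd — which upon sorting out the position of $3,5,15$ is exactly the alternation of the four listed cases.

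The main obstacle is the primitivity bookkeeping rather than the class-group arithmetic. The delicate points are that a repeated ramified factor ($9\mid N$ or $25\mid N$) renders every ideal of norm $N$ divisible by $(3)$ resp.\ $(5)$ and hence imprimitive, and that for split primes the class is well-defined modulo $2$ only because $[\overline{\mathfrak p}_i]=[\mathfrak p_i]^{-1}=[\mathfrak p_i]$ in $\Z/2\Z$; this collapsing of conjugates into a single class is precisely what turns composition into the clean parity condition and is what the passage to $Cl(-15)$ buys us. Split primes $p\equiv1,4\pmod{15}$ sit in the trivial class and so leave the parity unchanged, which is why only the residues $2,8$ and the ramified $3,5$ surface in the statement. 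Once primitivity has been checked prime by prime, reading off the four cases is immediate.
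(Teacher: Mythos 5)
Your argument is correct and is exactly the route the paper has in mind: the paper offers no written proof of this lemma at all, only the observation that $2,3,5$ are represented while $6,10,15$ are not, and the remark that the form is the $2$-torsion class in $Cl(-15)$, which ``explains why a parity condition enters.'' Your reduction to $2v_1^2+v_1v_2+2v_2^2=N$, the genus-theoretic split/inert/ramified classification of primes, and the parity count in $Cl(-15)\cong\Z/2\Z$ via primitive ideals supply precisely the missing details, including the correct primitivity bookkeeping at $3$ and $5$. One point deserves flagging, though: the criterion you actually derive --- no inert prime factors, $9\nmid N$, $25\nmid N$, and an odd total count (with multiplicity) of prime factors from $\{p\equiv 2,8 \bmod 15\}\cup\{3,5\}$ --- permits additional prime factors $\equiv 1,4\pmod{15}$, which lie in the trivial class, whereas the lemma as literally worded does not; for instance $N=38=2\cdot 19$ satisfies $2v_1^2+v_1v_2+2v_2^2=38$ with $(v_1,v_2)=(4,1)$ primitive, yet $19\equiv 4\pmod{15}$ puts $38$ outside all four listed cases. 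Your closing claim that ``reading off the four cases is immediate'' therefore glosses over a real discrepancy: what you prove is the corrected statement, and the lemma's list should be understood as allowing such extra factors in the trivial class (this only enlarges the set of $N$ reached by the construction, so nothing downstream in Theorem~\ref{thm}(3) is affected, but the ``only if'' direction of the lemma as printed is not what your --- correct --- argument establishes).
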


Enhancing $\NS$ by a primitive vector perpendicular to the vector from the lemma
gives the transcendental lattice 
\[
T'=U^2+\langle -2N\rangle.
\]
The elliptic fibration is thus endowed with a section $P$ of height $2N/15$.

\subsection{Proof of Theorem \ref{thm} (3)}
\label{ss3}

Let $N$ be  as in Theorem \ref{thm} (3) (or equivalently Lemma \ref{lem3}).
Let $X'$ be a member of the subfamily of K3 surfaces 
given by the lattice enhancement of $X$ corresponding to $N$ as above.
Then by assumption $T_{X'}$ embeds primitively into $U^2+\langle-2N\rangle$.
Moreover the quotient surface has transcendental lattice $T_{X'}(2)$ by Lemma \ref{lem0}
in conjunction with Lemma \ref{lem3}.
Thus these surface realise a sandwiched Shioda--Inose structure. \qed



\subsection{Example: $N=8$}

The elliptic fibration \eqref{eq10} ought to be equipped with a section $P$ of height $16/15$.
By the height pairing it suffices that $P$ meets both $I_{10}$ and $I_6$
 at $\Theta_2$ (the second component) while not intersecting $O$ as
$h(P)=4-\frac{2\cdot8}{10}-\frac{2\cdot4}6=\frac{16}{15}$.

The present Weierstrass form \eqref{eq10} implies the shape of
$P$ to be $(\alpha t^2, wt^2)$ with $\alpha\in k^*$ and $w\in k[t]$ of degree 2.
But then the pair $(\alpha, w)$ uniquely determines the polynomial $a(t)$ in \eqref{eq10}
by
\[
w^2=\alpha^3t^2+\alpha^2 a(t) + \alpha t.
\]
Hence we obtain the 3-dimensional family of K3 surfaces 
with generically $T=U^2+\langle-16\rangle$
(and the unirationality of the corresponding moduli space).

\subsection*{Acknowledgements}
The starting points to this paper came up during the 
"Workshop on Arithmetic and Geometry of K3 surfaces and Calabi-Yau threefolds"
at the Fields Institute in August 2011.
Thanks for the stimulating atmosphere and the great hospitality.

\end{document}